\documentclass[preprint,12pt]{elsarticle}




\usepackage{amssymb}
\usepackage{amsmath}
\usepackage{bm}
\usepackage{amsthm}





\newcommand{\adots}{
  \mathinner{\mkern1mu\raise1pt\hbox{.}\mkern2mu\raise4pt\hbox{.}
  \mkern2mu\raise7pt\vbox{\kern7pt\hbox{.}}\mkern1mu}}

\newcommand{\sign}{\mathop{\mathrm{sign}}}

\renewcommand{\Re}{\mathop{\mathrm{Re}}}
\renewcommand{\Im}{\mathop{\mathrm{Im}}}

\newcommand{\range}{\mathop{\mathrm{range}}}

\usepackage{bm}
\usepackage{mathtools}
\DeclareMathAlphabet{\matheul}{U}{eus}{m}{n}

\theoremstyle{plain}
\newtheorem{thm}{Theorem}[section]

\newtheorem{pro}[thm]{Proposition}
\theoremstyle{definition}
\newtheorem{defn}[thm]{Definition}
\newtheorem{exmp}{Example}[section]
\journal{Linear Algebra and its Applications}

\begin{document}

\begin{frontmatter}



\title{A New  Polar Decomposition \\ in a Scalar Product Space}


\author{Xuefang Sui\fnref{e1}}
\author{Paolo Gondolo\fnref{e2}}

\fntext[e1]{{\it Email address:} xuefang.sui@utah.edu}
\fntext[e2]{{\it Email address:} paolo.gondolo@utah.edu}

\address{Department of Physics and Astronomy, University of Utah, 115 South 1400 East \#201, Salt Lake City, UT 84112-0830}

\begin{abstract}
There are various definitions of right and left polar decompositions of an $m\times n$ matrix $F \in \mathbb{K}^{m\times n}$ (where $\mathbb{K}=\mathbb{C}$ or $\mathbb{R}$) with respect to bilinear or sesquilinear products defined by nonsingular matrices $M\in \mathbb{K}^{m\times m}$ and $N\in \mathbb{K}^{n\times n}$.  The existence and uniqueness of such decompositions under various assumptions on $F$, $M$, and $N$ have been studied.  Here we  introduce a new form of right and left polar decompositions, $F=WS$ and $F=S'W'$, respectively, where the matrix $W$ has orthonormal columns ($W'$ has orthonormal rows) with respect to suitably defined scalar products which are functions of $M$, $N$, and $F$, and the matrix $S$ is selfadjoint with respect to the same suitably defined scalar products and has eigenvalues only in the open right half-plane. We show that our right and left decompositions exist and are unique for any nonsingular matrices $M$ and $N$ when the matrix $F$ satisfies $(F^{[M,N]})^{[N,M]}=F$ and $F^{[M,N]}F$ ($FF^{[M,N]}$, respectively) is nonsingular, where  $F^{[M,N]}=N^{-1} F^\# M$ with $F^\#=F^T$  for real or complex bilinear forms and $F^\#=\bar{F}^T$ for sesquilinear forms. When $M=N$, our results apply to nonsingular square matrices $F$. Our assumptions on $F$, $M$, and $N$ are in some respects weaker and in some respects stronger than those of previous work on polar decompositions.  
\end{abstract}

\begin{keyword}
Scalar product  \sep Polar decomposition \sep  Sign function

\MSC[2010]  15A23 \sep 15A63 \sep 47B50
\end{keyword}

\end{frontmatter}



\section{Introduction}

Let $\mathbb{K}$ denote either  the field of real numbers $\mathbb{K} = \mathbb{R}$ or the field of complex numbers $\mathbb{K} = \mathbb{C}$.
Consider a  scalar product $[x,y]_N$ over the field  $\mathbb{K}$,  i.e.,   a bilinear or sesquilinear form defined by a nonsingular matrix $N\in \mathbb{K}^{n \times n}$:
\begin{align}
[x,y]_N=\begin{cases}
x^T Ny,   \quad \text{for real or complex bilinear forms}, \\
\bar{x}^TNy ,  \quad \text{for sesquilinear forms},
\end{cases}
\end{align}
for $x, y \in \mathbb{K}^n$, where $x^T$ indicates the transpose of $x$ and $\bar{x}$ indicates the complex conjugate of $x$.   

The $N$-adjoint $A^{[N]}$ of a matrix $A\in \mathbb{K}^{n \times n}$ is defined as $[Ax,y]_N=[x,A^{[N]}y]_N$ for all $x,y \in \mathbb{K}^n$,  that is 
\begin{align}
A^{[N]}=N^{-1}A^\# N
\end{align}
with (using the notation in \cite{Higham2010})
\begin{align}
A^\#=\begin{cases}
A^T,  \quad \text{for real or complex bilinear forms}, \\
\bar{A}^T,   \quad \text{for sesquilinear forms}.
\end{cases}
\end{align}

A matrix  $A \in  \mathbb{K}^{n\times n}$ is called selfadjoint if $A^\#=A$ and unitary if $A^\# A=I_n$, where $I_n\in\mathbb{K}^{n\times n}$ is the identity matrix.
A matrix  $A \in  \mathbb{K}^{n\times n}$ is called $N$-selfadjoint if $A^{[N]}=A$, or $A^\# N=NA$, and $N$-unitary if $A^{[N]} A=I_n$, or $A^\# N A = N$.

The $(M,N)$-adjoint  $A^{[M, N]}\in \mathbb{K}^{n\times m}$ of a matrix $A\in \mathbb{K}^{m\times n}$, where $M \in \mathbb{K}^{m \times m}$ and $N\in \mathbb{K}^{n\times n}$ are nonsingular matrices, is defined by the condition  (see e.g.~\cite{Higham2010})
\begin{align}
[Ax, y]_M=[x,A^{[M, N]} y]_N  \qquad \text{for all} \quad x\in \mathbb{K}^n,y\in \mathbb{K}^m.
\end{align}
The $(M,N)$-adjoint $A^{[M, N]}$ can be written as
\begin{align}
A^{[M, N]}=N^{-1} A^\# M.
\end{align}
If $M=N$, then $A^{[N,N]}=A^{[N]}$. Two useful properties of the $(M,N)$-adjoint are\\ (a) for $A\in \mathbb{K}^{m\times n}$ and $B\in \mathbb{K}^{n\times n}$,
\begin{align}
(AB)^{[M,N]}=N^{-1}B^\# A^\# M=N^{-1}B^\#  N N^{-1}A^\# M=B^{[N]}A^{[M,N]};
\end{align}
 (b) for $A\in \mathbb{K}^{m\times m}$ and $B\in \mathbb{K}^{m\times n}$,
\begin{align}
(AB)^{[M,N]}=N^{-1}B^\# A^\# M=N^{-1}B^\# MM^{-1}A^\# M=B^{[M,N]}A^{[M]}.
\end{align}

The classical concept of polar decomposition in  a Euclidean space is expressed by the following statements.  
Any matrix $F \in \mathbb{K}^{m\times n} $ with $m\ge n$ has a right polar decomposition 
\begin{align}
F = U S,
\end{align} 
where  the matrix $ U \in  \mathbb{K}^{m\times n} $ has orthonormal columns (i.e., $U^\ast U = I_n$) and  the matrix $ S  \in \mathbb{K}^{n\times n} $, uniquely given by $S=\sqrt{F^\ast F}$, is selfadjoint and positive-semidefinite (i.e., all of its eigenvalues are real and positive or zero). Here $A^\ast=A^T$ for $\mathbb{K}=\mathbb{R}$  and  $A^\ast=\bar{A}^T$ for $\mathbb{K}=\mathbb{C}$.   $\sqrt{A}$ is  the principal square root of a positive-definite matrix $A$ or the unique  positive-semidefinite  square root of a positive-semidefinite  selfadjoint  matrix $A$ with zero eigenvalues.
Similarly, any matrix $F \in \mathbb{K}^{m\times n} $ with $m\le n$ has a left polar decomposition 
\begin{align}
F = S' \, U',
\end{align} 
where  the matrix $ U' \in  \mathbb{K}^{m\times n} $ has orthonormal rows (i.e., $U' U^{\prime\ast} = I_m$) and  the matrix $ S'  \in \mathbb{K}^{m\times m} $, uniquely given by $S'=\sqrt{F F^\ast}$, is selfadjoint and positive-semi\-definite.  In  case $F\in\mathbb{K}^{n\times n}$ is a square nonsingular matrix, the right and left polar decompositions are unique with $U=FS^{-1}$ and $U'=S^{\prime-1}F$, respectively, and $U$ and $U'$ are unitary matrices.

There is a long history on generalizing the classical polar decomposition to scalar products on $\mathbb{K}^n$ given by a bilinear or sesquilinear form.  Different generalizations exist based on the dimension of the matrices involved and the required properties of the factors in the decomposition.

To the extent of our knowledge, polar decompositions of square matrices $F\in\mathbb{K}^{n\times n}$ with respect to scalar products defined by a nonsingular matrix $N\in\mathbb{K}^{n\times n}$ are of the form
\begin{align}
\label{eq:Hpolar}
F = L S ,
\end{align}
where $L\in\mathbb{K}^{n\times n}$ is $N$-unitary and $S\in\mathbb{K}^{n\times n}$ is $N$-selfadjoint. For example, in~\cite{Bolshakov1995, Bolshakov1996, Bolshakov19961997, Bolshakov1997,  Bolshakov19971997, Kintzel2005, Mehl2005}, necessary and sufficient conditions are given for the existence of decompositions of the form (\ref{eq:Hpolar}) when the matrix $N$ is a selfadjoint matrix $H$ (such decompositions are called $H$-polar decompositions and are not necessarily unique even if $F$ is nonsingular). Further restricting the $H$-selfadjoint factor to satisfy the condition that $HS$ is positive-semidefinite, still with $H$ selfadjoint, leads to the semidefinite $H$-polar decomposition of a square matrix $F$ in~\cite{Bolshakov19971997, Kintzel2005}, which exists (but is in general not unique) if and only if $F^{[H]}F$ is diagonalizable and has only nonnegative real eigenvalues and $\ker(F)$ satisfies the restrictions in Theorem 5.3 of~\cite{Bolshakov19971997}. The generalized polar decomposition of a square matrix $F$ studied in~\cite{Cardoso2002, Higham2004, Higham2005, Higham2010, Mackey2006} is also of the form (\ref{eq:Hpolar}), but with the additional restriction that the nonzero eigenvalues of $S$ are contained in the open right half-plane, i.e., considering both zero and nonzero eigenvalues,  the spectrum of $S$: $\Lambda(S) \subseteq \{ z\in \mathbb{C} : \Re(z) > 0 \} \cup \{0\}$. In particular, \cite{Higham2005} shows that a generalized polar decomposition of a nonsingular square matrix $F$ exists if and only if $F^{[N]} F$ has no negative real eigenvalues and $(F^{[N]})^{[N]} = F$, and that when such a factorization exists, it is unique. And~\cite{Higham2010} shows that for an orthosymmetric scalar product $N$ (i.e., such that $(F^{[N]})^{[N]} = F$ for all $F\in\mathbb{K}^{n\times n}$) a generalized polar decomposition of singular and nonsingular square matrices $F$ exists with a unique $N$-selfadjoint factor $S$ if and only if $\ker(F^{[N]} F) = \ker(F)$, $F^{[N]}F$ has no negative real eigenvalues and the zero eigenvalues of $F^{[N]}F$, if any, are semisimple.

Polar decompositions of rectangular matrices $F\in\mathbb{K}^{m\times n}$ with respect to underlying scalar products defined by nonsingular matrices $M$ and $N$ on $\mathbb{K}^m$ and $\mathbb{K}^n$ have been studied in~\cite{Higham2010} (see also~\cite{Bolshakov1995, Kintzel2005} for the special case of two selfadjoint scalar products with $m=n$). In~\cite{Higham2010}, under the assumption that $M$ and $N$ form an orthosymmetric pair (see their Definition 3.2), a canonical generalized polar decomposition of $F\in\mathbb{K}^{m\times n}$ is defined as a decomposition
\begin{align}
F={\it\Lambda} S,
\end{align}
in which the matrix ${\it\Lambda}\in\mathbb{K}^{m\times n}$ is a partial $(M,N)$-isometry, i.e., ${\it\Lambda} {\it\Lambda}^{[M,N]} {\it\Lambda} = {\it\Lambda}$, the matrix $S\in\mathbb{K}^{n\times n}$ is $N$-selfadjoint with nonzero eigenvalues contained in the open right half-plane, and $\range({\it\Lambda}^{[M,N]})=\range(S)$. Theorem 3.9 in~\cite{Higham2010} states that, for an orthosymmetric pair of nonsingular matrices $M\in\mathbb{K}^{m\times m}$ and $N\in\mathbb{K}^{n\times n}$, a matrix $F\in\mathbb{K}^{m\times n}$ has a unique canonical polar decomposition if and only if $\ker(F^{[M,N]} F) = \ker(F)$, $F^{[M,N]}F$ has no negative real eigenvalues and the zero eigenvalues of $F^{[M,N]}F$, if any, are semisimple.

In this paper, we replace  the condition of orthosymmetric pair  of  $M$ and $N$ by $(F^{[M,N]})^{[N,M]}=F$,  and we drop the condition of no negative real eigenvalues of $F^{[M,N]}F$ (which is $F^{[N]}F$ for square matrices $F$ with $M=N$), by extending the form of the polar decomposition of a matrix $F\in\mathbb{K}^{m\times n}$ with $m\ge n$ to
\begin{align}
F = W S ,
\end{align}
where $W\in\mathbb{K}^{m\times n}$ has orthonormal columns with respect to suitably defined scalar products which are functions of $M$, $N$, and $F$ but is not necessarily $N$-unitary or a partial $(M,N)$-isometry, while $S\in\mathbb{K}^{n\times n}$ is selfadjoint with respect to the same suitably defined scalar products, has  eigenvalues only in the open right half-plane, and turns out to be unique. When $m\le n$ we use a left decomposition
\begin{align}
F = S' W' 
\end{align}
with analogous properties for $S'\in\mathbb{K}^{m\times m}$ and $W'\in\mathbb{K}^{m\times n}$ (in this case, $W'$ has orthonormal rows instead of orthonormal columns, with respect to the suitably defined scalar products). However, we do all this here only for matrices $F\in\mathbb{K}^{m\times n}$ for which $F^{[M,N]}F$ if $m\ge n$, or $FF^{[M,N]}$ if $m\le n$, is nonsingular. Thus our assumptions on $F$, $M$, and $N$ in this paper are in some respects weaker and in some respects stronger than those of previous work on polar decompositions. Our right and left decomposition Theorems~\ref{thm:F=WS_MN} and~\ref{thm:F=SW_MN} for matrices $F\in\mathbb{K}^{m\times n}$ closely resemble the classical right and left polar-decomposition theorems mentioned above.

In Section~\ref{sec:definitions}, we define $r$-positive-definite matrices and extend the concept of matrices with orthonormal columns and rows to generic pairs of scalar products. In Section~\ref{sec:function} we introduce generalized matrix sign functions. In Section~\ref{sec:F=WS}, we present our right and left decompositions for nonsingular square matrices with respect to one scalar product. In Section~\ref{sec:F=WS_MN},  we present our right and left decompositions for rectangular matrices  with respect to  two scalar products. Finally, in  Section~\ref{sec:exmp},  we make comments on our theorems and give some examples.

\section{Some useful definitions}
\label{sec:definitions}

In this section, we introduce some definitions to simplify the language.

First we extend the definition of positive-definite matrices to matrices with nonreal eigenvalues as follows. 
\begin{defn}($r$-positive-definite)
\label{defn:r-positive-definite}
A matrix $A\in\mathbb{K}^{n\times n}$ is $r$-positive-definite if all of its eigenvalues have positive real part,  or equivalently if all of its eigenvalues lie in the open right half-plane. 
\end{defn}  

Secondly, we extend the definitions of matrices with orthonormal rows or columns to the case of non-Euclidean scalar products.

\begin{defn}($(M,N)$-orthonormal columns, $(M,N)$-orthonormal rows, and $(M,N)$-unitarity)
\label{defn:m-n-orthonormal-columns}
Let $M \in \mathbb{K}^{m \times m}$ and $N\in \mathbb{K}^{n\times n}$ be nonsingular matrices. A matrix $W\in\mathbb{K}^{m\times n}$ has $(M,N)$-orthonormal columns if $W^{[M,N]} W =I_n$. 
A matrix $W\in\mathbb{K}^{m\times n}$ has $(M,N)$-orthonormal rows if $WW^{[M,N]} =I_m$. If a matrix $W\in\mathbb{K}^{m\times n}$ has both $(M,N)$-orthonormal rows and $(M,N)$-orthonormal columns (in which case $m=n$,  $W$ is a nonsingular square matrix),  we say that $W$ is $(M,N)$-unitary.
\end{defn}  

The condition that $W\in\mathbb{K}^{m\times n}$ has $(M,N)$-orthonormal columns can also be written as
\begin{align}
\label{eq:WNWN}
W^\# M W=N,
\end{align}
or
\begin{align}
\label{eq:WNWNbis}
[Wx,Wy]_{M}=[x,y]_{N} \quad  \text{for all} \quad x,y \in \mathbb{K}^n,
\end{align}
while the condition that $W\in\mathbb{K}^{m\times n}$ has $(M,N)$-orthonormal rows can also be written as
\begin{align}
M^{-1} = W N^{-1} W^\#,
\end{align}
or
\begin{align}
[x,y]_{M^{-1}} = [W^\# x,W^\# y]_{N^{-1}} \quad  \text{for all} \quad x,y \in \mathbb{K}^m.
\end{align}

When $m=n$, a square matrix $W\in\mathbb{K}^{n\times n}$ that has $(M,N)$-orthonormal columns or $(M,N)$-orthonormal rows is necessarily nonsingular.  A nonsingular matrix $W\in\mathbb{K}^{n\times n}$ has $(M,N)$-orthonormal columns if and only if it has $(M,N)$-orthonormal rows.  By Definition~\ref{defn:m-n-orthonormal-columns}, $W$ is $(M,N)$-unitary.     

A square matrix $W\in\mathbb{K}^{n\times n}$ that satisfies conditions (\ref{eq:WNWN}) and (\ref{eq:WNWNbis}) for nonsingular selfadjoint matrices $M$ and $N$ has been called $(N,M)$-unitary or $N$-$M$-unitary in~\cite{Bolshakov1995, Gohberg2005}.  Our Definition~\ref{defn:m-n-orthonormal-columns} extends these notions to more general matrices.

\section{Generalized matrix sign function}
\label{sec:function}
In this section, we introduce a  primary  matrix function called  a generalized matrix sign function. 

First, we recall  some facts about  primary matrix functions (see, e.g.,~\cite{Gantmacher1977, Higham2008, HornJohnson1991}).  A primary matrix function $f$ of a matrix $A\in\mathbb{K}^{n\times n}$ can be defined by means of a function $f: \mathbb{C} \to \mathbb{C}$ defined on the spectrum of $A$.  The  function $f: \mathbb{C} \to \mathbb{C}$ is  called the stem function of the matrix function $f$ and here they are denoted by the same letter. 

\begin{defn}(chapter~V in~\cite{Gantmacher1977} or Definition 1.1 in~\cite{Higham2008}) 
A  function $f: \mathbb{C} \to \mathbb{C}$ is said to be defined on the spectrum of a matrix $A\in\mathbb{K}^{n\times n}$ if its value $f(\lambda_k)$  and the values of its $s_k-1$ derivatives 
\begin{align}
\label{eq:derivative}
f^{(j)}(\lambda_k),\quad \quad \quad j=0, \hdots,  s_k-1, \quad \quad k =1, \hdots, t,
\end{align}
exist at all eigenvalues  $\lambda_k$ of $A$.  Here $s_k$  is the size of the Jordan blocks $J_{s_k}(\lambda_k)$ in the Jordan decomposition of $A$.
\end{defn} 
As remarked in~\cite{Higham2008} right after Definition 1.1, arbitrary numbers can be assigned as the values of $f(\lambda_k)$ and its derivatives $f^{(j)}(\lambda_k), j=1,\hdots, s_k-1$, at each eigenvalue $\lambda_k$ of $A$.

 With a stem function $f$,  the primary matrix function  $f(A)$ of $A\in\mathbb{K}^{n\times n}$ is given by  $f(A)=f(QJQ^{-1})=Qf(J)Q^{-1}$, where, with a nonsingular matrix $Q\in\mathbb{K}^{n\times n}$,  $A=QJQ^{-1}$ is a Jordan decomposition of $A$ and $f(J)$ is calculated by applying the stem function to each Jordan block.
A primary matrix function $f(A)$  of a matrix $A\in\mathbb{K}^{n\times n}$ is well defined in the sense that it is unique. 

By  Thm.~1.12 in~\cite{HornJohnson1991},  a  primary matrix function of $A\in\mathbb{K}^{n\times n}$  is  a polynomial in $A$.  It follows that all the primary matrix functions $f(A)$ commute with the matrix $A$  and also  commute with  each other. 

The relations $f(A^T) = f(A)^T$ and $f(Q^{-1} A Q) = Q^{-1} f(A) Q$ for a nonsingular matrix $Q\in\mathbb{K}^{n\times n}$ hold for any primary matrix function $f$ of a matrix $A\in\mathbb{K}^{n\times n}$.  If the stem function in Equation~(\ref{eq:derivative}) satisfies $f^{(j)}(\bar{\lambda})=\overline{f^{(j)}(\lambda)}$, then $f(\bar{A})=\overline{f(A)}$, and $f(A)$ is real when $A$ is real. It follows that, with a product matrix $N\in\mathbb{K}^{n\times n}$, for bilinear forms $f(A^{[N]})=f(A)^{[N]}$ always holds, while for sesquilinear forms,  $f(A^{[N]})=f(A)^{[N]}$ is equivalent to $f(\bar{A})=\overline{f(A)}$  (see Theorem 3.1~\cite{Higham2005}).  If  $f(A^{[N]})=f(A)^{[N]}$ and $A$ is $N$-selfadjoint, then both $f(A)$ and $f(A)A$ are $N$-selfadjoint. 

We now introduce the concept of a generalized matrix sign function.

\begin{defn}(generalized sign function)
\label{defn:sign_function}
A primary matrix function $\sigma: \mathbb{K}^{n\times n} \to \mathbb{K}^{n\times n}$ is called a generalized matrix sign function if for all nonsingular  matrices $A\in\mathbb{K}^{n\times n}$, $\sigma(A)A$ has no eigenvalues on the negative real axis,  $\sigma(\bar{A})=\overline{\sigma(A)}$, and all the eigenvalues $\lambda$ of $\sigma(A)$ are on the unit circle $|\lambda|=1$.
\end{defn}

Notice that the generalized sign $\sigma(A)$ of an $N$-selfadjoint matrix $A$ is $N$-selfadjoint.

We give some examples of generalized matrix sign functions.

In~\cite{Roberts1971}, a matrix sign function is defined as  the primary matrix function associated to the stem function
\begin{align}
\text{sign}(\lambda)=\begin{cases}
+1, \quad\quad & \text{for }  \Re \lambda>0, \\
-1, \quad\quad &\text{for }    \Re \lambda<0 , \\
\text{undefined},  \quad\quad &\text{for }    \Re \lambda=0.
\end{cases}
\end{align}
Also see,  e.g.,~\cite{Higham1994, KenneyLaub1995}.  It is shown in~\cite{Higham1994} that for  a matrix $A$ such that $A^2$ is nonsingular and has no negative real  eigenvalues,  the relation $\text{sign}(A)=A(A^2)^{-1/2}$ holds.

The first example of a generalized sign function is obtained by extending the stem function $\sign(\lambda)$ to the imaginary axis. For example, the function
\begin{align}
\label{eq:Sign_1}
f(\lambda)=\begin{cases}
+1, \quad\quad & \text{for }  \Re \lambda>0, \\
-1, \quad\quad &\text{for }    \Re \lambda<0 , \\
+1,  \quad\quad &\text{for }    \Re \lambda=0,  \Im \lambda \ne 0, \\
\text{undefined},\quad\quad &\text {for} \quad \lambda=0,
\end{cases}
\end{align}
with
\begin{align}
f^{(j)}(\lambda) = 0, \quad \text{for } j\ge 1,
\end{align}
is a stem function of a generalized matrix sign function.  
 
A second example of a generalized sign function arises from the stem function
\begin{align}
\label{eq:Sign}
f(\lambda)=\begin{cases}
\text{undefined},\quad\quad &\text {for} \quad \lambda=0,\\
-1, \quad \quad &\text{for}\quad \Re \lambda <0,  \Im \lambda=0, \\
+1, \quad\quad &\text {otherwise},
\end{cases}
\end{align}
with 
\begin{align}
f{^{(j)}}(\lambda)=0,  \quad  \text{for } j\ge 1.
\end{align}

A third example of a generalized sign function defined on the spectrum of a  matrix is
\begin{align}
\label{eq:Sign_3}
f(\lambda)=\begin{cases}
{\bar{\lambda}}/{|\lambda|}, \quad \quad &\text{for}  \quad \lambda \ne0, \\
\text{undefined},\quad\quad &\text {for} \quad \lambda=0,
\end{cases}
\end{align}
and  all the derivatives are defined to be zero, i.e., 
\begin{align}
f{^{(j)}}(\lambda)=0,  \quad  \text{for } j\ge 1.
\end{align}
Since in this paper we define the generalized sign function for nonsingular matrices, we leave $f(\lambda)$ at $\lambda=0$ undefined.

\section{Right and left decompositions of square matrices}
\label{sec:F=WS}

\begin{thm}
\label{thm:F=WS}
Given a nonsingular scalar product defined by  $N\in \mathbb{K}^{n\times n}$ and a generalized sign function $\sigma: \mathbb{K}^{n\times n} \to \mathbb{K}^{n\times n}$,  a matrix $F\in \mathbb{K}^{n\times n}$ has a  decomposition 
\begin{align}
\label{eq:F=WS}
F = W S,
\end{align}
where, with $\Sigma=\sigma(F^{[N]}F)$, the matrix  $W\in \mathbb{K}^{n\times n}$ is $(N, N\Sigma^{-1})$-unitary with $(W^{[N]})^{[N]}=W$ and  the matrix $S \in \mathbb{K}^{n\times n}$ is $r$-positive-definite, $N$-selfadjoint and $N\Sigma$-selfadjoint, if and only if $F$ is nonsingular and $(F^{[N]})^{[N]}=F$.  When such a decomposition  exists it is unique, with $S$ given by $S=(\Sigma F^{[N]}F)^{1/2}$ and $W=FS^{-1}$.
\end{thm}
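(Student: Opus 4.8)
The plan is to prove both directions of the equivalence, starting with necessity, which is the easier direction. Assume $F = WS$ with the stated properties. Since $W$ is $(N,N\Sigma^{-1})$-unitary it is nonsingular, and $S$ is $r$-positive-definite hence nonsingular, so $F$ is nonsingular. To check $(F^{[N]})^{[N]} = F$, I would compute $F^{[N]} = S^{[N]} W^{[N]} = S W^{[N]}$ using $N$-selfadjointness of $S$, and then apply the adjoint again, using $(W^{[N]})^{[N]} = W$ from the hypothesis and $(S^{[N]})^{[N]} = S$; the routine identities for adjoints of products then give $(F^{[N]})^{[N]} = F$. One subtlety for sesquilinear forms is that $(\,\cdot\,)^{[N]}$ applied twice is not automatically the identity on all matrices, which is exactly why the hypothesis $(W^{[N]})^{[N]}=W$ is needed rather than being free.

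For sufficiency, assume $F$ is nonsingular and $(F^{[N]})^{[N]}=F$. First I would verify that $F^{[N]}F$ is nonsingular and hence $\Sigma = \sigma(F^{[N]}F)$ is well-defined as a primary matrix function; moreover $\Sigma$ is a polynomial in $F^{[N]}F$, so it commutes with $F^{[N]}F$. The key point is that $A := F^{[N]}F$ is $N$-selfadjoint: indeed $A^{[N]} = F^{[N]} (F^{[N]})^{[N]} = F^{[N]} F = A$ using property (a) of the adjoint together with the standing hypothesis $(F^{[N]})^{[N]} = F$. Since $\sigma$ satisfies $\sigma(\bar A) = \overline{\sigma(A)}$, we get $\sigma(A^{[N]}) = \sigma(A)^{[N]}$, so $\Sigma$ is $N$-selfadjoint, and therefore so is $\Sigma A$. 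By the defining property of a generalized sign function, $\Sigma A = \Sigma F^{[N]}F$ has no eigenvalues on the negative real axis, so its principal square root $S := (\Sigma F^{[N]} F)^{1/2}$ exists, is a primary matrix function of $\Sigma A$, is $r$-positive-definite (eigenvalues of $S$ are principal square roots of eigenvalues of $\Sigma A$, which avoid the negative real axis), and is nonsingular. Being a primary matrix function of the $N$-selfadjoint matrix $\Sigma A$, $S$ is $N$-selfadjoint; and since $S^2 = \Sigma A = \Sigma F^{[N]}F$ commutes with $\Sigma$, one checks $S$ commutes with $\Sigma$ as well (as $S$ is a polynomial in $S^2$... more carefully, $S$ is a polynomial in $\Sigma F^{[N]}F$ and $\Sigma$ is a polynomial in $F^{[N]}F$, and all these commute), which will be needed to get $N\Sigma$-selfadjointness of $S$: from $S^{[N]} = S$ and $S\Sigma = \Sigma S$ one gets $(N\Sigma) S = \ldots = S^\#(N\Sigma)$.

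Now define $W := F S^{-1}$. I must show $W$ has the claimed properties. For $(N, N\Sigma^{-1})$-unitarity I compute $W^\# N W = S^{-\#} F^\# N F S^{-1} = S^{-\#} N F^{[N]} F S^{-1}$ (using $F^\# N = N F^{[N]}$), and since $F^{[N]} F = \Sigma^{-1} S^2$ and $S$ is $N$-selfadjoint (so $S^\# N = N S$, giving $S^{-\#} N = N S^{-1}$), this should collapse to $N \Sigma^{-1}$ after using that $S^{-1}$ commutes with $\Sigma^{-1}$; that establishes $W^{[N,N\Sigma^{-1}]} W = I_n$, i.e. $(N,N\Sigma^{-1})$-orthonormal columns, and since $W$ is square and nonsingular it is $(N,N\Sigma^{-1})$-unitary. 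The condition $(W^{[N]})^{[N]} = W$ then follows from $W = FS^{-1}$, $(F^{[N]})^{[N]} = F$, $(S^{[N]})^{[N]} = S$ and the product rules. Finally, uniqueness: if $F = W_1 S_1$ is any such decomposition, then $F^{[N]} F = S_1 W_1^{[N]} W_1 S_1$; I would show $W_1^{[N]} W_1 = \Sigma^{-1}$ from $W_1$ being $(N,N\Sigma^{-1})$-unitary — wait, that presupposes the $\Sigma$ in the decomposition equals $\sigma(F^{[N]}F)$, which it does by the theorem's setup — so $F^{[N]} F = S_1 \Sigma^{-1} S_1$, hence $\Sigma F^{[N]} F = \Sigma S_1 \Sigma^{-1} S_1$, and using that $S_1$ commutes with $\Sigma$ (which one re-derives from $S_1$ being $N$- and $N\Sigma$-selfadjoint: $S_1^\# N = N S_1$ and $S_1^\# N \Sigma = N \Sigma S_1$ together force $N\Sigma S_1 = N S_1 \Sigma$, i.e. $\Sigma S_1 = S_1 \Sigma$) this gives $S_1^2 = \Sigma F^{[N]} F$; since $S_1$ is $r$-positive-definite it must be the principal square root, so $S_1 = S$ and then $W_1 = FS_1^{-1} = W$.

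The main obstacle I anticipate is bookkeeping around the commutativity of $S$ and $\Sigma$ and the repeated use of the twisted selfadjointness conditions — in particular, verifying that $S$ is simultaneously $N$-selfadjoint and $N\Sigma$-selfadjoint requires knowing $S\Sigma = \Sigma S$, and in the uniqueness argument one must extract this commutativity from the hypotheses on $S_1$ rather than from a known square-root formula. Establishing the $N$-selfadjointness of $\Sigma$ in the sesquilinear case also hinges essentially on the defining property $\sigma(\bar A) = \overline{\sigma(A)}$ combined with $A^{[N]} = A$; this is where the structure of the generalized sign function built in Section~\ref{sec:function} is doing real work.
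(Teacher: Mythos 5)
Your proposal is correct and follows essentially the same route as the paper's proof: deriving $N$-selfadjointness of $F^{[N]}F$ and hence of $\Sigma$ from $(F^{[N]})^{[N]}=F$, taking $S=(\Sigma F^{[N]}F)^{1/2}$ as the principal square root, using the polynomial/commutativity argument for $S\Sigma=\Sigma S$, verifying $W^{[N]}W=\Sigma^{-1}$ for $W=FS^{-1}$, and proving uniqueness by extracting $S_1\Sigma=\Sigma S_1$ from the two selfadjointness hypotheses and invoking uniqueness of the principal square root. The minor differences (e.g., deducing nonsingularity of $F$ directly from that of $W$ and $S$ rather than from $\Sigma F^{[N]}F=S^2$) are cosmetic.
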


\begin{proof}   IF: Since $(F^{[N]})^{[N]}=F$, then $F^{[N]} F$ is $N$-selfadjoint besides being nonsingular.  Since $\sigma$ is a generalized sign function,  $\Sigma=\sigma(F^{[N]}F)$ is $N$-selfadjoint and commutes with $F^{[N]} F$, and $\Sigma F^{[N]} F$  is $N$-selfadjoint with no zero or negative real eigenvalues. Thus the principal square root
\begin{align} 
S = \Big( \Sigma F^{[N]} F \Big)^{1/2}
\end{align}
is well-defined and is $r$-positive-definite and $N$-selfadjoint (the latter follows from Lemma 3.1 (b) in~\cite{Mackey2006}, for example). 
Since both $\Sigma$ and $S$  are primary matrix functions and thus polynomials in $F^{[N]} F$,  $\Sigma$ and $S$ commute.  We compute
\begin{align}
S^{[N\Sigma]}=(N\Sigma)^{-1}S^\# (N\Sigma)=\Sigma^{-1}S^{[N]} \Sigma=\Sigma^{-1}S \Sigma=S.
\label{eq:18}
\end{align}
That is, $S$ is  $N\Sigma$-selfadjoint. 

Let $W  = F S^{-1}$.  Then $F = W S$.   We need to show that $W$ is $(N,N\Sigma^{-1})$-unitary, i.e.\ $W^\# N W=N\Sigma^{-1}$. 
Substituting $W=FS^{-1}$  in $W^{[N]}W$,  and using $S^{[N]}=S$, $S^{-[N]} =S^{-1}$,  $F^{[N]}F = \Sigma^{-1} S^2$ and $\Sigma S = S \Sigma$, we find
\begin{align}
\label{eq:detA}
W^{[N]}W=S^{-[N]} F^{[N]}FS^{-1}=S^{-1}\Sigma ^{-1}S^2 S^{-1} =\Sigma^{-1},  
\end{align}
from which  $W^\# NW=N\Sigma^{-1}$.  Moreover,
\begin{align}
(W^{[N]})^{[N]}=((FS^{-1})^{[N]})^{[N]}=(F^{[N]})^{[N]}(S^{-[N]})^{[N]}=FS^{-1}=W.
\end{align}

Now we show that the decomposition $F=WS$ is unique.   Assume there is another decomposition $F=\tilde{W}\tilde{S}$ where, with $\Sigma=\sigma(F^{[N]}F)$, $\tilde{W}$ is $(N, N\Sigma^{-1})$-unitary, $(\tilde{W}^{[N]})^{[N]}=\tilde{W}$,  $\tilde{S}$ is $r$-positive-definite, $N$-selfadjoint and $N\Sigma$-selfadjoint. Since $\tilde{S}$ is both $N$-selfadjoint and $N\Sigma$-selfadjoint, then $\tilde{S}$ commutes with $\Sigma$.  Thus $F^{[N]}F=\tilde{S} \tilde{W}^{[N]} \tilde{W}\tilde{S}=\tilde{S} \Sigma^{-1} \tilde{S}=\Sigma^{-1}  \tilde{S}^{2}$, and  $\Sigma F^{[N]}F=\tilde{S}^{2}$. Since  $\tilde{S}$ is $r$-positive definite and a square root of $\Sigma F^{[N]}F$, so $\tilde{S}$ is the principal square root of $\Sigma F^{[N]}F$, which is unique. Hence $\tilde{S}=S$. The uniqueness of $W$ follows from $\tilde{W}=W=FS^{-1}$.

ONLY IF:  Assume $F=WS$ where, with $\Sigma=\sigma(F^{[N]}F)$, $W$ is $(N, N\Sigma^{-1})$-unitary, $(W^{[N]})^{[N]}=W$,  $S$ is $r$-positive-definite, $N$-selfadjoint and $N\Sigma$-selfadjoint. Then
\begin{align}
(F^{[N]})^{[N]}=((WS)^{[N]})^{[N]}=(W^{[N]})^{[N]}(S^{[N]})^{[N]}=WS=F.
\end{align}
Since $S$ is $N$-selfadjoint and $N\Sigma$-selfadjoint, then $S$ commutes with $\Sigma$, and it follows that $\Sigma F^{[N]}F=S^2$. Since $S$ is $r$-positive-definite, implying that it is nonsingular,  then $\Sigma F^{[N]}F$ is nonsingular,  it follows that  $F^{[N]}F$ and $F$ are nonsingular. 
\end{proof}

\begin{thm}
\label{thm:F=SW}
If  a nonsingular matrix $F\in \mathbb{K}^{n\times n}$ has a decomposition  $F=WS$ in Theorem~\ref{thm:F=WS},  then $F$  also has a decomposition  
\begin{align}
\label{eq:F=SW}
F=S^\prime \, W,
\end{align}
where, with $\Sigma^\prime= \sigma(FF^{[N]})$,  the matrix $S^\prime \in \mathbb{K}^{n\times n}$ is  $r$-positive-definite, $N$-selfadjoint and $N\Sigma^\prime$-selfadjoint,  and the matrix $W\in \mathbb{K}^{n\times n}$  is $(N\Sigma^\prime,N)$-unitary.  When such a decomposition  exists it is unique, with $S^\prime$ given by $S^\prime=(\Sigma^\prime FF^{[N]})^{1/2}$ and $W=S^{\prime-1}F$.  $W$ in~(\ref{eq:F=WS}) and~(\ref{eq:F=SW}) is the same one. 
\end{thm}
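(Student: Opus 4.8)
The plan is to obtain the left decomposition directly from the right decomposition of Theorem~\ref{thm:F=WS}, keeping the \emph{same} factor $W$ and setting $S^\prime=WSW^{-1}$. First I would record the structural facts about $\Sigma^\prime$ and $S^\prime$ in parallel with Theorem~\ref{thm:F=WS}. Since $(F^{[N]})^{[N]}=F$, the matrix $FF^{[N]}$ is $N$-selfadjoint, because $(FF^{[N]})^{[N]}=(F^{[N]})^{[N]}F^{[N]}=FF^{[N]}$. Hence $\Sigma^\prime=\sigma(FF^{[N]})$ is $N$-selfadjoint and is a polynomial in $FF^{[N]}$, the product $\Sigma^\prime FF^{[N]}$ is $N$-selfadjoint with no zero or negative real eigenvalues, and therefore $S^\prime:=(\Sigma^\prime FF^{[N]})^{1/2}$ is well defined, $r$-positive-definite, and $N$-selfadjoint (the last point exactly as in the proof of Theorem~\ref{thm:F=WS}). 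Since $S^\prime$ is itself a polynomial in $FF^{[N]}$ it commutes with $\Sigma^\prime$, and the one-line computation of~(\ref{eq:18}), with $\Sigma^\prime$ in place of $\Sigma$, then shows that $S^\prime$ is $N\Sigma^\prime$-selfadjoint.

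The crux is the similarity $FF^{[N]}=W(F^{[N]}F)W^{-1}$. From $F=WS$ and $S^{[N]}=S$ we have $F^{[N]}=S\,W^{[N]}$, while $W^{[N]}W=\Sigma^{-1}$ (equation~(\ref{eq:detA})) gives $W^{[N]}=\Sigma^{-1}W^{-1}$; hence, using that $S$ commutes with $\Sigma$ and that $F^{[N]}F=\Sigma^{-1}S^2$, one computes $FF^{[N]}=WS\cdot S\,W^{[N]}=WS^2\Sigma^{-1}W^{-1}=W(\Sigma^{-1}S^2)W^{-1}=W(F^{[N]}F)W^{-1}$. Applying the primary-function identity $\sigma(QAQ^{-1})=Q\sigma(A)Q^{-1}$ with $Q=W$ then yields $\Sigma^\prime=W\Sigma W^{-1}$, and squaring $WSW^{-1}$ gives $(WSW^{-1})^2=WS^2W^{-1}=W(\Sigma F^{[N]}F)W^{-1}=\Sigma^\prime FF^{[N]}$; since $WSW^{-1}$ is similar to $S$ it is $r$-positive-definite, hence it is the principal square root, so indeed $S^\prime=WSW^{-1}$. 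Existence is then immediate: $S^\prime W=WSW^{-1}W=WS=F$, and since $\Sigma^\prime W=W\Sigma$ and $W^\#NW=N\Sigma^{-1}$ we get $W^\#N\Sigma^\prime W=W^\#NW\Sigma=N\Sigma^{-1}\Sigma=N$, so $W$ is $(N\Sigma^\prime,N)$-unitary.

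For uniqueness I would argue as in the uniqueness part of Theorem~\ref{thm:F=WS}: suppose $F=\tilde S^\prime\tilde W$ with $\Sigma^\prime=\sigma(FF^{[N]})$, $\tilde S^\prime$ $r$-positive-definite, $N$-selfadjoint and $N\Sigma^\prime$-selfadjoint, and $\tilde W$ $(N\Sigma^\prime,N)$-unitary. Being both $N$- and $N\Sigma^\prime$-selfadjoint forces $\tilde S^\prime$ to commute with $\Sigma^\prime$, exactly as in~(\ref{eq:18}). From $\tilde W^\#N\Sigma^\prime\tilde W=N$ we obtain $\tilde W^{[N]}=\tilde W^{-1}\Sigma^{\prime-1}$, hence $\tilde W\tilde W^{[N]}=\Sigma^{\prime-1}$, so $FF^{[N]}=(\tilde S^\prime\tilde W)(\tilde S^\prime\tilde W)^{[N]}=\tilde S^\prime\tilde W\,\tilde W^{[N]}\tilde S^\prime=\tilde S^\prime\Sigma^{\prime-1}\tilde S^\prime=\Sigma^{\prime-1}(\tilde S^\prime)^2$, and therefore $\Sigma^\prime FF^{[N]}=(\tilde S^\prime)^2$. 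As $\tilde S^\prime$ is an $r$-positive-definite square root of this matrix, it coincides with its principal square root, so $\tilde S^\prime=S^\prime$, and then $\tilde W=(\tilde S^\prime)^{-1}F=(S^\prime)^{-1}F=W$; in particular the factor $W$ here is the same one as in~(\ref{eq:F=WS}). I expect the only step that needs genuine care to be the similarity $FF^{[N]}=W(F^{[N]}F)W^{-1}$, since the identification of $\Sigma^\prime$, the formula $S^\prime=WSW^{-1}$, and both selfadjointness properties of $S^\prime$ all rest on it.
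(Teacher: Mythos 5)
Your proposal is correct and follows essentially the same route as the paper: establish the similarity $FF^{[N]}=W(F^{[N]}F)W^{-1}$, deduce $\Sigma'=W\Sigma W^{-1}$ and $S'=WSW^{-1}=(\Sigma'FF^{[N]})^{1/2}$ from the primary-matrix-function property, and read off $F=S'W$ together with the $(N\Sigma',N)$-unitarity of $W$. Your uniqueness argument is also the intended one (the paper only sketches it by reference to Theorem~\ref{thm:F=WS}), and it is carried out correctly.
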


\begin{proof} Let $\Sigma= \sigma(F^{[N]}F)$ and $\Sigma^\prime= \sigma(FF^{[N]})$.  Since 
\begin{align}
FF^{[N]}=&F(F^{[N]}F)F^{-1}=WS \Sigma ^{-1} S^2 (WS)^{-1} \nonumber\\
=&W \Sigma ^{-1} S^2 W^{-1}=W(F^{[N]}F)W^{-1},
\end{align}
and $\sigma$ is a primary matrix function, then $\Sigma^\prime=W\Sigma W^{-1}$ and $\Sigma^\prime FF^{[N]}=W(\Sigma F^{[N]}F)W^{-1}$.   Let 
\begin{align}
S^\prime=(\Sigma^\prime FF^{[N]})^{1/2}=W (\Sigma F^{[N]}F)^{1/2}W^{-1}=WSW^{-1}.
\end{align}
Since $FF^{[N]}$ and $\Sigma^\prime FF^{[N]}$ are $N$-selfadjoint, then $S^\prime$ is $N$-selfadjoint and  $N\Sigma^\prime$-selfadjoint, following a computation similar to (\ref{eq:18}).     Then $F=WS=WSW^{-1} W=S^\prime W$.   And 
\begin{align}
\label{eq:detB}
WW^{[N]}=S^{\prime-1}F(S^{\prime-1}F)^{[N]}=S^{\prime-1}FF^{[N]}S^{\prime-1}=\Sigma^{\prime -1}.  
\end{align}
Since $W$ is nonsingular,  it follows that $W^\# N\Sigma^\prime W=N$,   that is,  $W$ is $(N\Sigma^\prime, N)$-unitary. 

The proof of uniqueness is similar to the one of Theorem~\ref{thm:F=WS}.
\end{proof}

If the scalar product matrix $N$ is orthosymmetric (Definition 2.4 in~\cite{Mackey2006}), then  the condition $(F^{[N]})^{[N]}=F$ is satisfied for all matrices $F$. If $N$ is not orthosymmetric, then the validity of the condition $(F^{[N]})^{[N]}=F$ depends on the specific matrices $N$ and $F$ and on the nature of the scalar product defined by $N$ (real bilinear, complex bilinear or sesquilinear). For a given matrix $N$, some matrices $F$ may satisfy the condition $(F^{[N]})^{[N]}=F$ and some may not. For example, for the sesquilinear form defined by $N=\begin{pmatrix} &1\\2i &\end{pmatrix}$, the matrix $F=\begin{pmatrix}1&\\&i\end{pmatrix}$ satisfies $(F^{[N]})^{[N]}=F$ and the matrix $F=\begin{pmatrix}&1\\i&\end{pmatrix}$ does not. Moreover, it is not enough to specify the matrices $N$ and $F$, but one must specify the kind of scalar product. For example, take $N=\begin{pmatrix}&1\\i&\end{pmatrix}$  and  $F=\begin{pmatrix}&1\\4i& \end{pmatrix}$: if $N$ defines a sesquilinear product, then $(F^{[N]})^{[N]}=F$, and if $N$ defines a complex bilinear product, then $(F^{[N]})^{[N]} \ne F$.

We end this section by discussing the decomposition in the terms of the matrix determinants. 
\begin{pro}
For the decompositions $F=WS=S'W$ in Theorems~\ref{thm:F=WS} and~\ref{thm:F=SW}, the following holds. 
\begin{align}
& \det W = \pm 1 && \text{for real bilinear forms},
\label{eq:det1}
\\
& | \det W |  = 1 &&  \text{for complex bilinear  and sesquilinear forms},
\label{eq:det2}
\\
& \det \Sigma = \det \Sigma' = +1 && \text{for real bilinear  and sesquilinear forms},
\label{eq:det3}
\\
& | \det \Sigma \, | = | \det \Sigma' \, | = 1 && \text{for complex bilinear forms} .
\label{eq:det4}
\end{align}
\end{pro}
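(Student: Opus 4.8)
The plan is to read off everything from the factorizations in Theorems~\ref{thm:F=WS} and~\ref{thm:F=SW} together with the defining properties of a generalized sign function, thereby reducing the proposition to elementary determinant identities.

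First I would record the easy facts. Since $\sigma$ is a primary matrix function, $\Sigma=\sigma(F^{[N]}F)$ is a polynomial in $F^{[N]}F$, so the two matrices are simultaneously triangularizable and the eigenvalues of $\Sigma$ are exactly $\sigma(\mu_1),\dots,\sigma(\mu_n)$, where $\mu_1,\dots,\mu_n$ are the eigenvalues of $F^{[N]}F$ listed with algebraic multiplicity. By Definition~\ref{defn:sign_function} each $\sigma(\mu_k)$ lies on the unit circle, so $|\det\Sigma|=\prod_k|\sigma(\mu_k)|=1$; and since the proof of Theorem~\ref{thm:F=SW} gives $\Sigma'=W\Sigma W^{-1}$, we get $\det\Sigma'=\det\Sigma$ and hence $|\det\Sigma'|=1$. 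This already yields (\ref{eq:det4}).

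Second, and this is the heart of the matter, I would pin down the sign of $\det\Sigma$ for real bilinear and sesquilinear forms. In those cases $\det(F^{[N]}F)=\det(F^\#)\det(F)$ equals $(\det F)^2>0$ (real bilinear) or $|\det F|^2>0$ (sesquilinear), and the spectrum of $F^{[N]}F$ is invariant under complex conjugation: for a real bilinear form $F^{[N]}F$ is a real matrix, and for a sesquilinear form $F^{[N]}F$ is $N$-selfadjoint, hence similar to $\overline{(F^{[N]}F)}^{T}$, whose eigenvalues are the conjugates of those of $F^{[N]}F$. Using $\sigma(\bar A)=\overline{\sigma(A)}$, the non-real eigenvalues come in conjugate pairs $\mu,\bar\mu$ of equal multiplicity, each pair contributing $\sigma(\mu)\overline{\sigma(\mu)}=|\sigma(\mu)|^2=1$ to $\det\Sigma$; a real eigenvalue $\mu$ has $\sigma(\mu)$ real (again by $\sigma(\bar A)=\overline{\sigma(A)}$) and on the unit circle, so $\sigma(\mu)=\pm1$, and since $\Sigma F^{[N]}F$ has no negative real eigenvalue we must have $\sigma(\mu)=+1$ for $\mu>0$ and $\sigma(\mu)=-1$ for $\mu<0$. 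Because $\det(F^{[N]}F)>0$, the negative real eigenvalues, counted with multiplicity, are even in number, so their total contribution to $\det\Sigma$ is $+1$; hence $\det\Sigma=+1$, and thus $\det\Sigma'=\det\Sigma=+1$, which is (\ref{eq:det3}).

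Third, for $W$ I would use that $W$ is $(N,N\Sigma^{-1})$-unitary, i.e.\ $W^\# N W=N\Sigma^{-1}$, whence $\det(W^\#)\det(W)=(\det\Sigma)^{-1}$. For real and complex bilinear forms $W^\#=W^T$, so $(\det W)^2=(\det\Sigma)^{-1}$; with $\det\Sigma=+1$ this gives $\det W=\pm1$ (real bilinear, (\ref{eq:det1})), and taking absolute values with $|\det\Sigma|=1$ gives $|\det W|=1$ (complex bilinear, part of (\ref{eq:det2})). For a sesquilinear form $W^\#=\bar{W}^T$, so $|\det W|^2=(\det\Sigma)^{-1}=1$, giving $|\det W|=1$ (part of (\ref{eq:det2})).

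I expect the second step to be the only real obstacle: upgrading $\det\Sigma=\pm1$ to $\det\Sigma=+1$ needs the conjugation symmetry of the spectrum of $F^{[N]}F$ (trivial in the real-bilinear case but requiring the $N$-selfadjointness of $F^{[N]}F$ in the sesquilinear case) together with the positivity of $\det(F^{[N]}F)$ and a careful count of the signs $\sigma(\mu)$ over the real eigenvalues with their multiplicities; everything else is a one-line determinant computation. Alternatively one could derive $\det\Sigma>0$ from the identity $\Sigma F^{[N]}F=S^2$ of Theorem~\ref{thm:F=WS} once one knows $\det S>0$, which follows from $S$ being $r$-positive-definite with conjugation-symmetric spectrum — essentially the same argument rearranged.
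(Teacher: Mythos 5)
Your proof is correct. For \eqref{eq:det4}, \eqref{eq:det2} and \eqref{eq:det1} you argue essentially as the paper does: the unit-circle condition in Definition~\ref{defn:sign_function} gives $|\det\Sigma|=|\det\Sigma'|=1$, and taking determinants in $W^\#NW=N\Sigma^{-1}$ (equation~\eqref{eq:detA}) gives $|\det W|=1$, which for $\mathbb{K}=\mathbb{R}$ becomes $\det W=\pm1$. Where you genuinely diverge is \eqref{eq:det3}. The paper takes the determinant of $S^2=\Sigma F^{[N]}F$ to obtain $(\det S)^2=(\det\Sigma)(\det F^\#)(\det F)$ as in \eqref{eq:36}, notes that $(\det F^\#)(\det F)$ equals $(\det F)^2>0$ or $|\det F|^2>0$ in the two relevant cases, and that $\det S$ is real (trivially for $\mathbb{K}=\mathbb{R}$, and from $S^{[N]}=S$ in the sesquilinear case), so $\det\Sigma>0$ and hence $\det\Sigma=+1$. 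You instead count eigenvalues: conjugation symmetry of the spectrum of $F^{[N]}F$ (reality of the matrix, respectively $N$-selfadjointness) pairs off the non-real eigenvalues with unit contribution to $\det\Sigma$; the requirement that $\Sigma F^{[N]}F$ have no negative real eigenvalues forces $\sigma(\mu)=+1$ for $\mu>0$ and $\sigma(\mu)=-1$ for $\mu<0$; and positivity of $\det(F^{[N]}F)$ forces an even number of negative real eigenvalues, so their $-1$'s cancel in pairs. Both arguments are sound. The paper's is shorter; yours is more informative, since it identifies exactly which eigenvalues of $\Sigma$ can equal $-1$ and makes transparent why the conclusion fails for complex bilinear forms (no conjugation symmetry of the spectrum). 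Your closing observation that the two arguments are essentially rearrangements of one another is accurate.
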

\begin{proof}
Let  $F$ be a  nonsingular matrix.  From $\Sigma=\sigma(F^{[N]}F)$, $\Sigma'=\sigma(FF^{[N]})$ and  the fact that $\sigma$ is a generalized sign function, it follows that $|\det\Sigma\,|=|\det\Sigma'\,|=1$, which is (\ref{eq:det4}). Taking the determinant of (\ref{eq:detA},~\ref{eq:detB}) it then follows that $|\det W| = 1$, which is (\ref{eq:det2}). And in the case $\mathbb{K}=\mathbb{R}$, (\ref{eq:det2}) becomes (\ref{eq:det1}). Finally, to prove (\ref{eq:det3}) for $\Sigma$, take the determinant of  $S^2=\Sigma F^{[N]}F$ and obtain 
\begin{align}
\label{eq:36}
(\det S)^2=(\det \Sigma)(\det F^{\#})(\det F).
\end{align}
For bilinear forms, (\ref{eq:36}) becomes 
\begin{align}
(\det S)^2=(\det \Sigma) (\det F)^2.
\label{eq:det5}
\end{align}
If $\mathbb{K}=\mathbb{R}$, one must then have $\det \Sigma=+1$. For sesquilinear forms, since  $S^{[N]}=S$, then $\overline{\det S}=\det S$, so $\det S$ is real. In addition,
\begin{align}
(\det S)^2=(\det \Sigma)  (\overline{\det F})(\det F).
\end{align}
Since $(\det S)^2>0$ and $(\overline{\det F})(\det F)>0$, it follows that $\det \Sigma=+1$. A similar reasoning leads to (\ref{eq:det3}) for $\Sigma'$.

\end{proof}

As a side remark, we notice that if in Definition~\ref{defn:sign_function} the condition $|\lambda|=1$ on the eigenvalues of $\sigma(A)$ is not imposed, then for the decompositions $F=WS=S'W$ in Theorems~\ref{thm:F=WS} and~\ref{thm:F=SW} one finds $\det\Sigma>0$ and $\det\Sigma'>0$ for real bilinear and sesquilinear forms,  $\det\Sigma\ne0$  and $\det\Sigma'\ne0$ for complex bilinear forms, and $| \det W| = 1/|\sqrt{\det\Sigma}|$. We have chosen the condition $|\lambda|=1$ in Definition~\ref{defn:sign_function} so as to make the matrix $W$ analogous to a unitary matrix in terms of its eigenvalues.

\section{Right and left decompositions of rectangular matrices}
\label{sec:F=WS_MN}

\begin{thm}
\label{thm:F=WS_MN}
Let $M\in \mathbb{K}^{m\times m}$ and  $N\in \mathbb{K}^{n\times n}$ be nonsingular matrices with $m\ge n$, and let $\sigma:\mathbb{K}^{n\times n}\to\mathbb{K}^{n\times n}$ be a generalized matrix sign function.  A rectangular matrix $F\in \mathbb{K}^{m\times n}$ has a decomposition
\begin{align}
\label{eq:F=WS_MN}
F = W S,
\end{align}
where,  with $\Sigma= \sigma(F^{[M,N]}F)$, the matrix $S\in \mathbb{K}^{n \times n}$ is $r$-positive-definite, $N$-selfadjoint and $N\Sigma$-selfadjoint, and the matrix  $W\in \mathbb{K}^{m \times n}$ has $(M,N\Sigma^{-1})$-orthonormal columns and satisfies $(W^{[M,N]})^{[N,M]}=W$, if and only if $F^{[M,N]}F$ is nonsingular and $(F^{[M,N]})^{[N,M]}=F$.  When such a decomposition exists it is unique, with $S$ given by $S=(\Sigma F^{[M,N]} F)^{1/2}$ and $W=FS^{-1}$.
\end{thm}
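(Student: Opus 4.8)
The plan is to mirror the proof of Theorem~\ref{thm:F=WS} for square matrices, adapting each step to the rectangular $(M,N)$-adjoint setting. The key structural fact I would exploit is property~(a) of the $(M,N)$-adjoint, namely $(AB)^{[M,N]}=B^{[N]}A^{[M,N]}$ for $A\in\mathbb{K}^{m\times n}$ and $B\in\mathbb{K}^{n\times n}$, together with the observation that $F^{[M,N]}F\in\mathbb{K}^{n\times n}$ is a genuine square matrix living in the single scalar-product space defined by $N$.

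\textbf{IF direction.} Assume $F^{[M,N]}F$ is nonsingular and $(F^{[M,N]})^{[N,M]}=F$. First I would show $F^{[M,N]}F$ is $N$-selfadjoint: using property~(a), $(F^{[M,N]}F)^{[N]}=(F^{[M,N]}F)^{[N,N]}$; writing $F^{[M,N]}F$ as a product and applying the adjoint rules gives $F^{[N]}(F^{[M,N]})^{[N,M]}=\dots$, and the hypothesis $(F^{[M,N]})^{[N,M]}=F$ collapses this to $F^{[M,N]}F$. Since $\sigma$ is a generalized sign function, $\Sigma=\sigma(F^{[M,N]}F)$ is a polynomial in $F^{[M,N]}F$, hence commutes with it and is itself $N$-selfadjoint; moreover $\Sigma F^{[M,N]}F$ is $N$-selfadjoint with spectrum off the nonpositive real axis, so its principal square root $S=(\Sigma F^{[M,N]}F)^{1/2}$ is well-defined, $r$-positive-definite and $N$-selfadjoint (by Lemma~3.1(b) of~\cite{Mackey2006} as in the square case). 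The computation $S^{[N\Sigma]}=\Sigma^{-1}S^{[N]}\Sigma=S$ shows $S$ is $N\Sigma$-selfadjoint, exactly as in~(\ref{eq:18}). Setting $W=FS^{-1}$ (legitimate since $S$ is invertible), I compute $W^{[M,N]}W=S^{-[N]}F^{[M,N]}FS^{-1}=S^{-1}\Sigma^{-1}S^2S^{-1}=\Sigma^{-1}$, so $W$ has $(M,N\Sigma^{-1})$-orthonormal columns. For the reflexivity of $W$ I would use property~(a) again: $(W^{[M,N]})^{[N,M]}=((FS^{-1})^{[M,N]})^{[N,M]}=(S^{-[N]}F^{[M,N]})^{[N,M]}$, and unwinding with the adjoint identities and the hypothesis $(F^{[M,N]})^{[N,M]}=F$ yields $FS^{-1}=W$. (I should double-check here which of properties (a)/(b) applies at each nesting, since the inner and outer adjoints swap $M$ and $N$; this bookkeeping is the one genuinely fiddly point.)

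\textbf{Uniqueness and ONLY IF.} For uniqueness, suppose $F=\tilde W\tilde S$ with the stated properties. Since $\tilde S$ is both $N$-selfadjoint and $N\Sigma$-selfadjoint it commutes with $\Sigma$; then $F^{[M,N]}F=\tilde S^{[N]}\tilde W^{[M,N]}\tilde W\tilde S=\tilde S\Sigma^{-1}\tilde S=\Sigma^{-1}\tilde S^2$, so $\tilde S^2=\Sigma F^{[M,N]}F$, and since $\tilde S$ is $r$-positive-definite it must be the principal square root, forcing $\tilde S=S$ and then $\tilde W=FS^{-1}=W$. For the ONLY IF direction, assume $F=WS$ with the stated properties; then $(F^{[M,N]})^{[N,M]}=((WS)^{[M,N]})^{[N,M]}=(S^{[N]}W^{[M,N]})^{[N,M]}=WS=F$ using properties~(a)/(b) and $(W^{[M,N]})^{[N,M]}=W$, $S^{[N]}=S$; and $S$ commuting with $\Sigma$ gives $\Sigma F^{[M,N]}F=S^2$, which is nonsingular because $S$ is $r$-positive-definite, hence $F^{[M,N]}F$ is nonsingular.

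The main obstacle I anticipate is not conceptual but notational: keeping the two scalar-product spaces straight and correctly applying property~(a) versus property~(b) of the $(M,N)$-adjoint at each step where adjoints are nested (in particular in verifying $(W^{[M,N]})^{[N,M]}=W$ and in the ONLY IF chain), since the roles of $M$ and $N$ alternate. Once the identity $(F^{[M,N]})^{[N,M]}=F$ is in hand, everything about $F^{[M,N]}F$ takes place in the single $N$-space and the argument is a faithful transcription of Theorem~\ref{thm:F=WS}; there is no new analytic input, and in particular no eigenvalue-location hypothesis on $F^{[M,N]}F$ is needed because the generalized sign function $\Sigma$ rotates the spectrum of $F^{[M,N]}F$ off the negative real axis before the square root is taken.
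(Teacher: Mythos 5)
Your proposal is correct and follows essentially the same route as the paper's proof: establish that $F^{[M,N]}F$ is $N$-selfadjoint from $(F^{[M,N]})^{[N,M]}=F$, form the principal square root $S=(\Sigma F^{[M,N]}F)^{1/2}$, set $W=FS^{-1}$, and verify the stated properties, with uniqueness and the ONLY IF direction argued exactly as in the paper; your check $W^{[M,N]}W=\Sigma^{-1}$ is equivalent to the paper's direct verification $W^{[M,N\Sigma^{-1}]}W=I_n$, since both amount to $W^\# M W=N\Sigma^{-1}$. The only blemish is the expression $F^{[N]}$ in your sketch of the selfadjointness step, which is undefined for rectangular $F$; the intended identity is $(F^{[M,N]}F)^{[N]}=F^{[M,N]}\,(F^{[M,N]})^{[N,M]}$, which the hypothesis collapses to $F^{[M,N]}F$.
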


\begin{proof} IF: Assume that $F^{[M,N]}F$ is nonsingular and $(F^{[M,N]})^{[N,M]}=F$. In the latter equation, write $(F^{[M,N]})^{[N,M]}=M^{-1}(N^{-1}F^\# M)^\# N=M^{-1}M^\# FN^{-\#} N$, so  $F=M^{-1}M^\# FN^{-\#} N$. Hence
\begin{align}
\label{eq:FMNF}
(F^{[M,N]}F)^{[N]}=& N^{-1}(N^{-1}F^\# MF)^\# N=N^{-1}F^\# M^\# FN^{-\#}N \nonumber \\
=&N^{-1}F^\# M(M^{-1}M^\# FN^{-\#}N)=F^{[M,N]}F,
\end{align}
i.e.,  $F^{[M,N]}F$ is $N$-selfadjoint besides being nonsingular. Since $\sigma$ is a generalized sign function, it follows that $\Sigma F^{[M,N]}F$ with  $\Sigma= \sigma(F^{[M,N]}F)\in\mathbb{K}^{n\times n}$ is $N$-selfadjoint, nonsingular, and has no  negative real eigenvalue. Hence the principal square root
\begin{align} 
S = \Big(\Sigma F^{[M,N]}F \Big)^{1/2}
\end{align}
is well-defined and is $r$-positive-definite and $N$-selfadjoint. 
Since both $\Sigma$ and $S$  are polynomials in $F^{[M,N]} F$,  $\Sigma$ and $S$ commute.  So
\begin{align}
S^{[N\Sigma]}=\Sigma^{-1} N^{-1} S^\# N\Sigma=\Sigma^{-1} S^{[N]} \Sigma=S.
\end{align}
That is,  $S$ is $N\Sigma$-selfadjoint. 

Let $W=F S^{-1}$.  Then $F = W S$.  To show that $W$ has $(M,N\Sigma^{-1})$-orthonormal columns, we show that it  satisfies $W^{[M,N\Sigma^{-1}]}W=I_n$. 
Substituting $W=FS^{-1}$  into $W^{[M,N\Sigma^{-1}]}W$,  and using $S^{[N]}=S$, $F^{[M,N]}F = \Sigma^{-1} S^2$ and $\Sigma S = S \Sigma$, we find

\begin{align}
\label{eq:WMNW}
W^{[M,N\Sigma^{-1}]}W=&(FS^{-1})^{[M,N\Sigma^{-1}]} FS^{-1} \nonumber\\
=&S^{-[N\Sigma^{-1}]}F^{[M,N\Sigma^{-1}]} FS^{-1} \nonumber\\
=&S^{-1}\Sigma F^{[M,N]} FS^{-1} \nonumber\\
=&I_n.
\end{align}
Furthermore,
\begin{align}
\label{eq:WMNNM}
(W^{[M,N]})^{[N,M]}=&\left( (FS^{-1})^{[M,N]} \right )^{[N,M]}\nonumber\\
=&\left( S^{-[N]} F^{[M,N]} \right )^{[N,M]}\nonumber\\
=&(F^{[M,N]})^{[N,M]} (S^{-[N]})^{[N]}\nonumber\\
=&FS^{-1}\nonumber\\
=&W.
\end{align}

Now we show that the decomposition is unique.   Assume there is another decomposition $F=\tilde{W}\tilde{S}$ where $\tilde{W}$ and $\tilde{S}$ satisfy the same conditions as $S$ and $W$ listed after~(\ref{eq:F=WS_MN}).  Since $\tilde{S}$ is both $N$-selfadjoint and $N\Sigma$-selfadjoint, then $\tilde{S}$ commutes with $\Sigma$.  Thus $F^{[M,N]}F=\tilde{S}^{[N]}\tilde{W}^{[M,N]} \tilde{W} \tilde{S} =\tilde{S} \Sigma^{-1} \tilde{S}=\Sigma^{-1}  \tilde{S}^{2}$, and $\Sigma F^{[M,N]}F=\tilde{S}^{2}$. Since  $\tilde{S}$ is $r$-positive-definite and a square root of $\Sigma F^{[M,N]}F$,  so $\tilde{S}$ is the principal square root of $\Sigma F^{[M,N]}F$, which is unique. Hence $\tilde{S}=S$. The uniqueness of $W$ follows from $\tilde{W}=W=FS^{-1}$.

ONLY IF:  If $F$ has a decomposition $F=WS$ as in the text of the theorem, then 
\begin{align}
(F^{[M,N]})^{[N,M]}=&\left((WS)^{[M,N]}\right)^{[N,M]} \nonumber\\
=&\left(S^{[N]}W^{[M,N]}\right)^{[N,M]} \nonumber \\
=&(W^{[M,N]})^{[N,M]}(S^{[N]})^{[N]} \nonumber \\
=&WS \nonumber\\
=&F. 
\end{align}
Since $S$ is assumed to be both $N$-selfadjoint and $N\Sigma$-selfadjoint, then $S$ commutes with $\Sigma$, and it follows that $\Sigma F^{[M,N]}F=S^2$. Since $S$ is $r$-positive-definite, implying that it is nonsingular,  then $\Sigma F^{[M,N]}F$ is nonsingular, and  it follows that  $F^{[M,N]}F$ is nonsingular.
\end{proof}

\begin{thm}
\label{thm:F=SW_MN}
Let $M\in \mathbb{K}^{m\times m}$ and  $N\in \mathbb{K}^{n\times n}$ be nonsingular matrices with $m \le n$, and let $\sigma:\mathbb{K}^{m\times m}\to\mathbb{K}^{m\times m}$ be a generalized matrix sign function.  A rectangular matrix $F\in \mathbb{K}^{m\times n}$ has a decomposition
\begin{align}
\label{eq:F=SW_MN}
F = S \, W,
\end{align}
where,  with $\Sigma= \sigma(FF^{[M,N]})$, the matrix $S\in \mathbb{K}^{m \times m}$ is $r$-positive-definite, $M$-selfadjoint and $M\Sigma$-selfadjoint, and the matrix  $W\in \mathbb{K}^{m \times n}$ has $(M\Sigma,N)$-orthonormal rows and satisfies $(W^{[M,N]})^{[N,M]}=W$, if and only if $FF^{[M,N]}$ is nonsingular and $(F^{[M,N]})^{[N,M]}=F$.  When such a decomposition exists it is unique, with $S$ given by $S=(\Sigma F F^{[M,N]})^{1/2}$ and $W=S^{-1}F$.
\end{thm}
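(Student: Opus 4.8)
The plan is to follow the proof of Theorem~\ref{thm:F=WS_MN} essentially line for line, with the left and right factors and the roles of rows and columns interchanged, and with the generalized sign function now evaluated at the $m\times m$ matrix $FF^{[M,N]}$ instead of at $F^{[M,N]}F$. The bookkeeping convention is that the $m$-dimensional side always carries one of the product matrices $M$ or $M\Sigma$ and the $n$-dimensional side carries $N$, with properties (a) and (b) of the $(M,N)$-adjoint invoked accordingly.

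For the ``if'' part, assume $FF^{[M,N]}$ is nonsingular and $(F^{[M,N]})^{[N,M]}=F$. Writing the latter out gives $(F^{[M,N]})^{[N,M]}=M^{-1}M^\#FN^{-\#}N$, so the hypothesis is equivalent to $M^\#FN^{-\#}=MFN^{-1}$. I would use this identity to show that $FF^{[M,N]}$ is $M$-selfadjoint: a direct computation gives $(FF^{[M,N]})^{[M]}=M^{-1}M^\#FN^{-\#}F^\#M$, and substituting the identity collapses this to $FN^{-1}F^\#M=FF^{[M,N]}$. Since $FF^{[M,N]}$ is also nonsingular and $\sigma$ is a generalized sign function, $\Sigma=\sigma(FF^{[M,N]})$ is $M$-selfadjoint, commutes with $FF^{[M,N]}$ (being a polynomial in it), and $\Sigma FF^{[M,N]}$ is $M$-selfadjoint, nonsingular, and has no negative real eigenvalue. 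Hence $S=(\Sigma FF^{[M,N]})^{1/2}$ is well-defined, $r$-positive-definite, and $M$-selfadjoint (the last by Lemma 3.1 (b) in~\cite{Mackey2006}), and since $S$ commutes with $\Sigma$, a computation like~(\ref{eq:18}) shows $S$ is also $M\Sigma$-selfadjoint.

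Next I would set $W=S^{-1}F$, so that $F=SW$, and verify the two required properties of $W$. Using property (b) of the $(M,N)$-adjoint and $F^{[M\Sigma,N]}=F^{[M,N]}\Sigma$, one gets $W^{[M\Sigma,N]}=F^{[M\Sigma,N]}(S^{-1})^{[M\Sigma]}=F^{[M,N]}\Sigma S^{-1}$, whence $WW^{[M\Sigma,N]}=S^{-1}(FF^{[M,N]})\Sigma S^{-1}=S^{-1}S^2S^{-1}=I_m$, i.e., $W$ has $(M\Sigma,N)$-orthonormal rows. Similarly $W^{[M,N]}=F^{[M,N]}S^{-1}$ by property (b), and then by property (a), together with $(F^{[M,N]})^{[N,M]}=F$ and $M$-selfadjointness of $S$, one finds $(W^{[M,N]})^{[N,M]}=(F^{[M,N]}S^{-1})^{[N,M]}=(S^{-1})^{[M]}(F^{[M,N]})^{[N,M]}=S^{-1}F=W$.

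Uniqueness and the ``only if'' part then follow the template of Theorem~\ref{thm:F=WS_MN}. Given another decomposition $F=\tilde S\tilde W$ with the stated properties, $M$- and $M\Sigma$-selfadjointness of $\tilde S$ force it to commute with $\Sigma$, the row-orthonormality of $\tilde W$ gives $\tilde W\tilde W^{[M,N]}=\Sigma^{-1}$, and hence $FF^{[M,N]}=\tilde S\Sigma^{-1}\tilde S=\Sigma^{-1}\tilde S^2$, so $\tilde S$ is the principal square root of $\Sigma FF^{[M,N]}$; this gives $\tilde S=S$ and $\tilde W=S^{-1}F=W$. Conversely, if $F=SW$ with the stated properties, expanding the double adjoint $(F^{[M,N]})^{[N,M]}=((SW)^{[M,N]})^{[N,M]}$ and using $(W^{[M,N]})^{[N,M]}=W$ yields $(F^{[M,N]})^{[N,M]}=F$, while $\Sigma FF^{[M,N]}=S^2$ shows $FF^{[M,N]}$ is nonsingular. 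The only computation that is not purely formal is the verification that $FF^{[M,N]}$ is $M$-selfadjoint, which is exactly where the hypothesis $(F^{[M,N]})^{[N,M]}=F$ is used in a nontrivial way; everything else is routine once the adjoint bookkeeping is fixed, so I do not anticipate any genuine obstacle.
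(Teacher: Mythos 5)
Your proposal is correct and follows essentially the same route as the paper, which itself only sketches this proof as a mirror image of Theorem~\ref{thm:F=WS_MN}; your computations (the $M$-selfadjointness of $FF^{[M,N]}$ from $M^\#FN^{-\#}=MFN^{-1}$, the verification $WW^{[M\Sigma,N]}=I_m$, the double-adjoint identity, and the uniqueness via the principal square root of $\Sigma FF^{[M,N]}$) all check out. No gaps.
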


\begin{proof}
The proof is similar to the proof of Theorem~\ref{thm:F=WS_MN} and we only give an outline. In a way similar to (\ref{eq:FMNF}), one can show that $FF^{[M,N]}$ is $M$-selfadjoint.  Let $\Sigma= \sigma(FF^{[M,N]})$ and $S=(\Sigma F F^{[M,N]})^{1/2}$. Then $S$ is both $M$ and $M\Sigma$-selfadjoint.   $W$ is given by $W=S^{-1}F$.  In a way similar to~(\ref{eq:WMNW},~\ref{eq:WMNNM}), one can prove that $W W^{[M\Sigma,N]}=I_m$ and  $(W^{[M,N]})^{[N,M]}=W$.  The proof of uniqueness is similar to the one of Theorem~\ref{thm:F=WS_MN}.
\end{proof}

In Theorem~\ref{thm:F=WS_MN} and~\ref{thm:F=SW_MN}, if $M$ and $N$ form an orthosymmetric  pair, then  the conditions $(F^{[M, N]})^{[N,M]}=F$ and $(W^{[M,N]})^{[N,M]}=W$ are always satisfied (Definition 3.2,  Lemma 3.3 in~\cite{Higham2010}).

Notice that when $m=n$,  the matrix $F$ is a square matrix.  In this case, if $F^{[M,N]}F$ is nonsingular, then also $F$ and $FF^{[M,N]}$ are nonsingular.   By Theorems~\ref{thm:F=WS_MN} and~\ref{thm:F=SW_MN},  $F=WS=S^\prime W^\prime$, where $W$ and $S$ satisfy the conditions in Theorem~\ref{thm:F=WS_MN} and $W'$ and $S'$ those in Theorem~\ref{thm:F=SW_MN}.  The following theorem  shows that  $W^\prime=W$ and $S^\prime=WSW^{-1}$.
\begin{thm}
\label{thm:F=WS=SW_MN}
Let $M\in \mathbb{K}^{n\times n}$ and  $N\in \mathbb{K}^{n\times n}$ be nonsingular matrices, and let $\sigma:\mathbb{K}^{n\times n}\to\mathbb{K}^{n\times n}$ be a generalized matrix sign function.     Then any nonsingular matrix $F \in \mathbb{K}^{n \times n}$  such that $(F^{[M,N]})^{[N,M]}=F$ can be factorized uniquely as
\begin{align}
F = W S=S^\prime W,
\end{align}
where,  with $\Sigma= \sigma(F^{[M,N]}F)$ and $\Sigma^\prime= \sigma(FF^{[M,N]})$, the matrix  $S\in \mathbb{K}^{n \times n}$ is $r$-positive-definite, $N$-selfadjoint and $N\Sigma$-selfadjoint,  the matrix  $S^\prime \in \mathbb{K}^{n \times n}$ is $r$-positive-definite, $M$-selfadjoint and $M\Sigma^\prime$-selfadjoint,  and the matrix $W\in \mathbb{K}^{n \times n}$  is $(M,N\Sigma^{-1})$-unitary, $(M\Sigma^\prime, N)$-unitary and satisfies  $(W^{[M,N]})^{[N,M]}=W$.  The matrices $S$, $S'$, and $W$ are given by $S=(\Sigma F^{[M,N]} F)^{1/2}$, $S'=(\Sigma'F  F^{[M,N]})^{1/2}$, and $W=FS^{-1} = S^{\prime-1} F$.
\end{thm}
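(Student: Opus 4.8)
The plan is to obtain this square-case result by combining Theorems~\ref{thm:F=WS_MN} and~\ref{thm:F=SW_MN} (both of which apply when $m=n$) and then identifying the two orthonormal factors by the similarity argument already used in the proof of Theorem~\ref{thm:F=SW}.

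First I would note that since $F$ is nonsingular, so is $F^\#$, and hence $F^{[M,N]}=N^{-1}F^\# M$; consequently both $F^{[M,N]}F$ and $FF^{[M,N]}$ are nonsingular. Together with the hypothesis $(F^{[M,N]})^{[N,M]}=F$, this lets me invoke Theorem~\ref{thm:F=WS_MN} in the case $m=n$: it supplies the unique decomposition $F=WS$ with $S=(\Sigma F^{[M,N]}F)^{1/2}$ being $r$-positive-definite, $N$-selfadjoint and $N\Sigma$-selfadjoint, and $W=FS^{-1}$ having $(M,N\Sigma^{-1})$-orthonormal columns and satisfying $(W^{[M,N]})^{[N,M]}=W$. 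Since $W$ is square and nonsingular, having $(M,N\Sigma^{-1})$-orthonormal columns makes it $(M,N\Sigma^{-1})$-unitary by Definition~\ref{defn:m-n-orthonormal-columns}. Symmetrically, Theorem~\ref{thm:F=SW_MN} in the case $m=n$ supplies the unique decomposition $F=S^\prime W^\prime$ with $S^\prime=(\Sigma^\prime FF^{[M,N]})^{1/2}$ being $r$-positive-definite, $M$-selfadjoint and $M\Sigma^\prime$-selfadjoint, and $W^\prime=S^{\prime-1}F$ having $(M\Sigma^\prime,N)$-orthonormal rows (hence $(M\Sigma^\prime,N)$-unitary, being square and nonsingular) and satisfying $(W^{\prime[M,N]})^{[N,M]}=W^\prime$.

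The substantive step, and the one I expect to require the most care, is to show $W^\prime=W$; this will simultaneously show that the common matrix satisfies all the $W$-conditions listed in the statement. Using $S^2=\Sigma F^{[M,N]}F$ and the fact that $\Sigma$ commutes with $F^{[M,N]}F$ (both being polynomials in $F^{[M,N]}F$), so that $F^{[M,N]}F=\Sigma^{-1}S^2$, I would compute
\begin{align}
FF^{[M,N]}=F\bigl(F^{[M,N]}F\bigr)F^{-1}=(WS)\,\Sigma^{-1}S^{2}\,(WS)^{-1}=W\bigl(F^{[M,N]}F\bigr)W^{-1}.
\end{align}
Because $\sigma$ and the principal square root are primary matrix functions, they commute with similarity transformations (the relation $f(Q^{-1}AQ)=Q^{-1}f(A)Q$ recalled in Section~\ref{sec:function}), so $\Sigma^\prime=\sigma(FF^{[M,N]})=W\Sigma W^{-1}$, hence $\Sigma^\prime FF^{[M,N]}=W(\Sigma F^{[M,N]}F)W^{-1}$, and therefore $S^\prime=(\Sigma^\prime FF^{[M,N]})^{1/2}=WSW^{-1}$; the principal square roots here are legitimate because $\Sigma F^{[M,N]}F$ has no nonpositive real eigenvalue, as established in the proof of Theorem~\ref{thm:F=WS_MN}. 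Then $S^\prime W=WSW^{-1}W=WS=F$, so by the uniqueness part of Theorem~\ref{thm:F=SW_MN} we must have $W^\prime=W$ and $S^\prime=WSW^{-1}$. Finally, uniqueness of the whole factorization follows from the uniqueness assertions in Theorems~\ref{thm:F=WS_MN} and~\ref{thm:F=SW_MN} together with this identification of the two $W$-factors. I do not anticipate any genuine obstacle beyond bookkeeping: the theorem is essentially a repackaging of the two preceding ones, with the displayed similarity computation as the only new ingredient.
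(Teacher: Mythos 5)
Your proposal is correct and follows essentially the same route as the paper's own proof: both rest on the similarity computation $FF^{[M,N]}=W(F^{[M,N]}F)W^{-1}$, the primary-matrix-function property $f(Q^{-1}AQ)=Q^{-1}f(A)Q$ to get $\Sigma'=W\Sigma W^{-1}$ and $S'=WSW^{-1}$, and the uniqueness assertions of Theorems~\ref{thm:F=WS_MN} and~\ref{thm:F=SW_MN}. The only cosmetic difference is that the paper exhibits $F=S'W$ directly from $S'=WSW^{-1}$, whereas you additionally invoke the uniqueness clause of Theorem~\ref{thm:F=SW_MN} to identify $W'=W$; both are valid and equivalent.
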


\begin{proof}
Let $\Sigma=\sigma(F^{[M,N]}F)$ and $\Sigma^\prime=\sigma(FF^{[M,N]})$. By Theorem~\ref{thm:F=WS_MN},  $F=WS$ with $S=(\Sigma F^{[M,N]} F)^{1/2}$ and $W=FS^{-1}$.    
Since for nonsingular $F$,
\begin{align}
FF^{[M,N]}=&F (F^{[M,N]}F)F^{-1}=WS \Sigma ^{-1} S^2 (WS)^{-1} \nonumber\\
=&W \Sigma ^{-1} S^2 W^{-1}=W (F^{[M,N]}F)W^{-1},
\end{align}
then $\Sigma^\prime=W\Sigma W^{-1}$ and $S^\prime=(\Sigma^\prime F F^{[M,N]})^{1/2}=W(\Sigma F^{[M,N]}F)^{1/2}W^{-1}=WSW^{-1}$.  So $F=WS=WSW^{-1}W=S^\prime W$. By Theorems~\ref{thm:F=WS_MN} and~\ref{thm:F=SW_MN}.  $W$ is both $(M, N\Sigma^{-1})$-unitary and $(M\Sigma^\prime, N)$-unitary. 
\end{proof}

When $M=N$, Theorem~\ref{thm:F=WS=SW_MN} reduces to Theorems~\ref{thm:F=WS} and~\ref{thm:F=SW}.

\section{Comments and examples}
\label{sec:exmp}

In this section,  we make some comments on the previous theorems and give some examples.  

\subsection{Comments and examples on Theorems~\ref{thm:F=WS} and~\ref{thm:F=SW}}

In Theorem~\ref{thm:F=WS}, when $\mathbb{K}=\mathbb{C}$, the same matrix $N$ can define a complex bilinear product or a sesquilinear product, and in general the corresponding $F=WS$ decompositions are different.
\begin{exmp}
\label{exmp:a}
Let  the generalized sign function  be defined by~(\ref{eq:Sign_1}), and let $F=\begin{pmatrix}-1+2 i\end{pmatrix} \in \mathbb{C}^{1\times1}$.

Let the matrix $N=\begin{pmatrix} 1\end{pmatrix}$ define a complex  bilinear product. Then $(F^{[N]})^{[N]}=F$ and one computes $F^{[N]}F=N^{-1}F^TNF=\begin{pmatrix}-3-4i\end{pmatrix}$,   $\Sigma=\begin{pmatrix}-1\end{pmatrix}$, $S=\begin{pmatrix}2+i\end{pmatrix}$, and $W=(i)$.  Therefore $F=WS=\begin{pmatrix}i\end{pmatrix}\begin{pmatrix}2+i\end{pmatrix}$.

Let the matrix $N=\begin{pmatrix} 1\end{pmatrix}$ define a sesquilinear product. Then $(F^{[N]})^{[N]}=F$ and one computes $F^{[N]}F=N^{-1}\bar{F}^TNF
=\begin{pmatrix}5\end{pmatrix}$,  $\Sigma=\begin{pmatrix}1\end{pmatrix}$, $S=\begin{pmatrix}\sqrt{5}\end{pmatrix}$ and $W=\begin{pmatrix}\frac{-1+2i}{\sqrt{5}}\end{pmatrix}$.
Therefore $F=WS=\begin{pmatrix}\frac{-1+2i}{\sqrt{5}}\end{pmatrix}\begin{pmatrix}\sqrt{5}\end{pmatrix}$.
\end{exmp}

Different generalized sign functions give rise to different $F=WS$ decompositions in Theorem~\ref{thm:F=WS}.

\begin{exmp}
In  Example~\ref{exmp:a} with $N=(1)$ defining a complex bilinear product, replace the generalized sign function~(\ref{eq:Sign_1}) by~(\ref{eq:Sign})  and~(\ref{eq:Sign_3}), respectively. 

With the generalized sign function in~(\ref{eq:Sign}), from $F^{[N]}F
=\begin{pmatrix}-3-4i\end{pmatrix}$ one finds $\Sigma=\begin{pmatrix}1\end{pmatrix}$ and  $S=(\Sigma F^{[N]}F)^{1/2}=\begin{pmatrix}1-2i\end{pmatrix}$, therefore $F=WS=\begin{pmatrix}-1\end{pmatrix}\begin{pmatrix}1-2i\end{pmatrix}$. 

With the generalized sign function in~(\ref{eq:Sign_3}), from $F^{[N]}F
=\begin{pmatrix}-3-4i\end{pmatrix}$ one finds  $\Sigma=\begin{pmatrix}\frac{-3+4i}{5}\end{pmatrix}$ and $S=\begin{pmatrix}\sqrt{5}\end{pmatrix}$, therefore $F=WS=\begin{pmatrix}\frac{-1+2i}{\sqrt{5}}\end{pmatrix}\begin{pmatrix}\sqrt{5}\end{pmatrix}$.
\end{exmp}

In general, the decomposition $F=WS$ in Theorem~\ref{thm:F=WS} of a given matrix $F$ differs from other kinds of polar decompositions in the literature, such as the generalized polar decomposition, the $H$-polar decomposition and polar decompositions defined for particular product matrices. 

The decomposition $F=WS$  is  related to the generalized polar decomposition in the sense that the $N$-selfadjoint matrix  $S$ is $r$-positive-definite.  In particular, for the generalized sign function  defined by~(\ref{eq:Sign}),  if $F^{[N]}F$ has no  negative real eigenvalues, then $\Sigma=I_n$ and   $W$ is $N$-unitary, therefore,  $F=WS$ is  a generalized polar decomposition. 
 
An $H$-polar decomposition is defined for  an indefinite inner product  specified by a selfadjoint matrix $H\in \mathbb{K}^{n\times n}$. In general, the decomposition  $F=WS$ of a matrix $F$ for a selfadjoint matrix $N=H$ is different from an $H$-polar decomposition $F=L_HS_H$ of the same matrix $F$.   

\begin{exmp} Let  the generalized sign function be defined by~(\ref{eq:Sign}).
Let the selfadjoint matrix $H=\begin{pmatrix}
1&\\
&-4
\end{pmatrix}$ define a real bilinear product.  
Let $F=\begin{pmatrix}
&4\\
1&
\end{pmatrix}$, which satisfies  $(F^{[H]})^{[H]}=F$. 

The decomposition $F=WS$ in Theorem~\ref{thm:F=WS} is computed as follows.
$
 F^{[H]}F=\begin{pmatrix}
 -4&\\
 &-4
 \end{pmatrix}$,
$ \Sigma=\begin{pmatrix}
 -1&\\
 &-1
 \end{pmatrix}$, 
$S=\begin{pmatrix}
2& \\
&2
\end{pmatrix}
$
and
$W = \begin{pmatrix}
&2\\
\frac{1}{2}&
\end{pmatrix}
$.
Therefore
$
F=WS=\begin{pmatrix}
&2\\
\frac{1}{2}&
\end{pmatrix}
\begin{pmatrix}
2&\\
&2
\end{pmatrix}.
$
Since $W$ is not $H$-unitary,  this $F=WS$ decomposition is not an $H$-polar decomposition.  

One possible $H$-polar decomposition is
$
 F=L_HS_H=\begin{pmatrix}
 -1&\\&1
 \end{pmatrix}
 \begin{pmatrix}
 &-4\\
 1&
 \end{pmatrix}.
$
\end{exmp}

The decomposition $F=WS$ in Theorem~\ref{thm:F=WS} is also in general different from other kinds of polar decompositions defined for scalar products given by particular matrices such as  $N=I_n$,  $N=Z$, $N=J$, or $N=D$ (see, e.g.,~\cite{Cardoso2002, Mackey2006}). Here
 \begin{align}
 Z=\begin{pmatrix}
 &&1\\
 &\adots&\\
 1&&
 \end{pmatrix}, \quad
 J=\begin{pmatrix}
 &I_m\\
 -I_m 
 \end{pmatrix}, \quad
 D=\begin{pmatrix}
 I_p&\\
 &-I_q
 \end{pmatrix}.
 \end{align}

\begin{exmp} Let  the generalized sign function  be defined by~(\ref{eq:Sign}).
Let the matrix $J=\begin{pmatrix}
&1\\
-1&
\end{pmatrix}$ define a sesquilinear  product. 
Let  $F=\begin{pmatrix}
i&\\
&-4i
\end{pmatrix}$, which satisfies $(F^{[J]})^{[J]}=F$.

Table 2.1 in~\cite{Mackey2006} defines a polar decomposition $F=L_JS_J$,  where $L_J$ is $J$-conjugate symplectic and $S_J$ is $J$-skew-Hermitian. One possible such decomposition is $F=L_JS_J=\begin{pmatrix}\frac{1}{2}&\\&2\end{pmatrix}\begin{pmatrix}2i&\\&-2i\end{pmatrix}$.

The $F=WS$ decomposition in Theorem~\ref{thm:F=WS} gives instead $F^{[J]}F=\begin{pmatrix}-4&\\&-4\end{pmatrix}$, $\Sigma=\begin{pmatrix}-1&\\&-1\end{pmatrix}$, $S=\begin{pmatrix}2&\\&2\end{pmatrix}$, and $W=\begin{pmatrix}\frac{i}{2}&\\&-2i\end{pmatrix}$, therefore  $F=WS=\begin{pmatrix}\frac{i}{2}&\\&-2i\end{pmatrix}\begin{pmatrix}2&\\&2\end{pmatrix}$.
\end{exmp}

Finally, we give an example of the decompositions $F=WS$ in Theorem~\ref{thm:F=WS} and $F=S'W$ in Theorem~\ref{thm:F=SW}  when $N$ is not  orthosymmetric.  

\begin{exmp}Let  the generalized sign function  be defined by~(\ref{eq:Sign_3}).
Let the nonorthosymmetric matrix $N=\begin{pmatrix} &1\\2i &\end{pmatrix}$ define a sesquilinear product.
Let  $F=\begin{pmatrix}-1&\\&4i\end{pmatrix}$.
One computes
$F^{[N]}F=\begin{pmatrix}4i&\\&-4i\end{pmatrix}$, $\Sigma=\begin{pmatrix}-i&\\&i\end{pmatrix}$ and $S=\begin{pmatrix}2&\\&2\end{pmatrix}$.  Therefore $F=WS=\begin{pmatrix}-\frac{1}{2}&\\&2i\end{pmatrix}\begin{pmatrix}2&\\&2\end{pmatrix}$ and $F=S^\prime W=\begin{pmatrix}2&\\&2\end{pmatrix}\begin{pmatrix}-\frac{1}{2}&\\&2i\end{pmatrix}$.   In this particular example, $S=S^\prime$.
\end{exmp}

\subsection{Comments and examples on Theorems~\ref{thm:F=WS_MN}-\ref{thm:F=WS=SW_MN}}

In Theorem~\ref{thm:F=WS_MN}, the $F=WS$ decomposition depends on the nature of the scalar product defined by the matrices $M$ and $N$ (bilinear or sesquilinear product).
      
\begin{exmp}
Let  the generalized sign function  be defined by~(\ref{eq:Sign_3}).  Let $M=\begin{pmatrix}&1 \\i & \end{pmatrix}$,  $N=\begin{pmatrix}1+i\end{pmatrix}$, and $F=\begin{pmatrix}1\\-4i\end{pmatrix}$.

If $M$ and $N$ define complex bilinear products, $(F^{[M,N]})^{[N,M]}=\begin{pmatrix}-i\\4\end{pmatrix}\ne F$,  and $F$ does not have an $F=WS$ decomposition. 

If $M$ and $N$ define sesquilinear products, then $(F^{[M,N]})^{[N,M]}=F$ and one computes $F^{[M,N]}F=\begin{pmatrix}-4\end{pmatrix}$,  $\Sigma=\begin{pmatrix}-1\end{pmatrix}$,  $S=\begin{pmatrix}2\end{pmatrix}$, and therefore $F=WS=\begin{pmatrix}\frac{1}{2}\\ -2i\end{pmatrix}\begin{pmatrix}2\end{pmatrix}$.
\end{exmp}

Theorems~\ref{thm:F=WS_MN} is related to the canonical generalized polar decomposition in~\cite{Higham2010} in the sense that the $N$-selfadjoint matrix  $S$ is $r$-positive-definite.  In particular,  let  the generalized sign function  be defined by~(\ref{eq:Sign}), and let  $M$ and $N$  form an orthosymmetric pair. If $F^{[M,N]}F$ has no  negative real eigenvalues, then $\Sigma=I_n$ and $W$ is $(M,N)$-unitary. Therefore,  in this case, $F=WS$ is  a canonical  generalized polar decomposition.

In general,  $M$ and $N$ do not need to form an orthosymmetric pair.   A more general condition is given by  $(F^{[M,N]})^{[N,M]}=F$. 

\begin{exmp}
Let  the generalized sign function  be defined by~(\ref{eq:Sign_3}).  Let  the matrices $M=\begin{pmatrix}&1 \\i & \end{pmatrix}$ and  $N=\begin{pmatrix}& i\\-1&\end{pmatrix}$ define two  complex bilinear  products.   Let $F=\begin{pmatrix}-1&\\&4\end{pmatrix}$, which satisfies  $(F^{[M,N]})^{[N,M]}=F$. One computes $F^{[M,N]}F=\begin{pmatrix}4i&\\&4i\end{pmatrix}$,  $\Sigma=\begin{pmatrix}-i&\\&-i\end{pmatrix}$, $S=\begin{pmatrix}2&\\&2\end{pmatrix}$, and $W=\begin{pmatrix}-\frac{1}{2}&\\ &2\end{pmatrix}$. Therefore $F=WS=\begin{pmatrix}-\frac{1}{2}&\\ &2\end{pmatrix}\begin{pmatrix}2&\\&2\end{pmatrix}$.
\end{exmp}

\begin{exmp}
Let  the generalized sign function  be defined by~(\ref{eq:Sign}).   Let the matrices $M=\begin{pmatrix}4i & \\ &1 \end{pmatrix}$ and  $N=\begin{pmatrix}1&\\ &-2i\end{pmatrix}$ define two  sesquilinear  products. Let  $F=\begin{pmatrix}&1\\3i&\end{pmatrix}$, which satisfies  $(F^{[M,N]})^{[N,M]}=F$.  One computes  $F^{[M,N]}F=\begin{pmatrix}9&\\&-2\end{pmatrix}$,  $\Sigma=\begin{pmatrix}1&\\&-1\end{pmatrix}$, $S=\begin{pmatrix}3&\\&\sqrt{2}\end{pmatrix}$, and $W=\begin{pmatrix}& \frac{1}{\sqrt{2}}\\i&\end{pmatrix}$. Therefore $F=WS=\begin{pmatrix}& \frac{1}{\sqrt{2}}\\i&\end{pmatrix}\begin{pmatrix}3&\\&\sqrt{2}\end{pmatrix}$. Similarly one finds $F=S^\prime W=\begin{pmatrix}\sqrt{2}&\\ &3\end{pmatrix}\begin{pmatrix}&\frac{1}{\sqrt{2}}\\i&\end{pmatrix}$.
\end{exmp}

\section*{Acknowledgements}

This research was supported by the University of Utah.








\section*{References}

\end{document}